\newtheorem{thm}{Theorem}
\newtheorem{lem}[thm]{Lemma}
\newdefinition{rmk}{Remark}
\newproof{pf}{Proof}
\newdefinition{example}{Example}
\newdefinition{definition}{Definition}
\newdefinition{proposition}{Proposition}
\newdefinition{problem}{Problem}
\newdefinition{corollary}{Corollary}
\journal{Arkiv}
\begin{document}

\begin{frontmatter}

%% Title, authors and addresses

%% use the tnoteref command within \title for footnotes;
%% use the tnotetext command for the associated footnote;
%% use the fnref command within \author or \address for footnotes;
%% use the fntext command for the associated footnote;
%% use the corref command within \author for corresponding author footnotes;
%% use the cortext command for the associated footnote;
%% use the ead command for the email address,
%% and the form \ead[url] for the home page:
%%
%% \title{Title\tnoteref{label1}}
%% \tnotetext[label1]{}
%% \author{Name\corref{cor1}\fnref{label2}}
%% \ead{email address}
%% \ead[url]{home page}
%% \fntext[label2]{}
%% \cortext[cor1]{}
%% \address{Address\fnref{label3}}
%% \fntext[label3]{}

\title{A new error bound for linear complementarity problems for $B$-matrices}

%% use optional labels to link authors explicitly to addresses:
%% \author[label1,label2]{<author name>}
%% \address[label1]{<address>}
%% \address[label2]{<address>}
\author%[rvt]
{Chaoqian Li \corref{cor1}} \ead{lichaoqian@ynu.edu.cn}

\author%[rvt]
{Mengting Gan} %\ead{jaq1029@163.com}
\author%[rvt]
{Shaorong Yang}%\corref{cor1}}
%\ead{liyaotang@ynu.edu.cn}

\cortext[cor1]{Corresponding author.}

%\address[focal]{Jilin Vocational college of Industry and
%Technology, Jilin,  P. R. China 132000}

\address%[rvt]
{School of Mathematics and Statistics, Yunnan
University, Yunnan,  P. R. China 650091}

\begin{abstract}
A new error bound for the linear complementarity problem is given
when the involved matrix is a $B$-matrix. It is shown that this
bound is sharper than some previous bounds [C.Q. Li, Y.T. Li. Note
on error bounds for linear complementarity problems for
$B$-matrices,\newblock {\em Applied Mathematics Letters,}
57:108-113, 2016] and [C.Q. Li, Y.T. Li. Weakly chained diagonally
dominant $B$-matrices and error bounds for linear complementarity
problems, to appear in Numer. Algor.].
%those provided by
%Garc\'{i}a-Esnaola and Pe\~{n}a ([Error bounds for linear
%complementarity problems for $B$-matrices. Appl. Math. Lett.,
%22:1071-1075, 2009]), Li and Li ([Note on error bounds for linear
%complementarity problems for $B$-matrices, Apllied Mathematics
%Letter],[Weakly chained diagonally dominant $B$-matrices and error
%bounds for linear complementarity problems, Numer. Algor.]).
\end{abstract}

\begin{keyword}
Error bound, Linear complementarity problem, $B$-matrix
%\MSC[2010] 15A18, 15A69  %\step
\end{keyword}

\end{frontmatter}

%%
%% Start line numbering here if you want
%%
% \linenumbers

%% main text
\section{Introduction}
 Linear complementarity problem  LCP$(M, q)$ is to find
a vector $x\in R^{n}$ such that
\begin{equation}\label{neq1.1}
x\geq 0, Mx+q\geq 0, (Mx+q)^Tx=0, \end{equation} where $M
=[m_{ij}]\in R^{n\times n}$ and $q\in R^n$. The $ LCP(M, q)$  has
various applications in the Nash equilibrium point of a bimatrix
game, the contact problem and the free boundary problem  for journal
bearing, for details,  see \cite{Chen,Co,Mu}.

It is well-known that the LCP$(M, q)$ has a unique solution for any
$q\in R^n$ if and only if $M$ is a $P$-matrix \cite{Co}. Here, a
matrix $M \in R^{n\times n}$ is called a $P$-matrix if  all its
principal minors are positive \cite{Pena1}. In \cite{Chen0}, Chen
and Xiang gave the following error bound of the LCP$(M, q)$ when $M$
is a $P$-matrix:
\[ ||x-x^*||_{\infty} \leq \max\limits_{d\in [0,1]^n}||(I -D+DM)^{-1}||_{\infty} ||r(x)||_\infty,\]
where $x^*$ is the solution of the LCP$(M, q)$, $r(x)=\min\{
x,Mx+q\}$, $D=diag(d_i)$ with $0\leq d_i \leq 1$, and the min
operator $r(x)$ denotes the componentwise minimum of two vectors. If
$M$ satisfies  certain structure, then some  bounds of
$\max\limits_{d\in [0,1]^n}||(I -D+DM)^{-1}||_{\infty}$ can be
derived; for details, see \cite{Che,Dai,Dai1,Ga,Ga4} and references
therein.

When $M$ is a $B$-matrix introduced by Pe\~{n}a in \cite{Pena1} as a
subclass of $P$-matrices, Garc\'{\i}a-Esnaola and Pe\~{n}a in
\cite{Ga} presented the following upper bound which is only related
with the entries of $M$. Here a real matrix $M=[m_{ij}]\in
R^{n\times n}$ is called a $B$-matrix  \cite{Pena1} if for each
$i\in N=\{1,2,\ldots, n\}$,
\begin{equation}\label{neq1.2} \sum\limits_{k\in N} m_{ik}>0,~and~
\frac{1}{n} \left(\sum\limits_{k\in N} m_{ik} \right)>
m_{ij},~for~any~ j\in N~ and~j\neq i .\end{equation}

\begin{thm} \emph{\cite[Theorem 2.2]{Ga} }\label{th1.1} Let $M=[m_{ij}]\in R^{n\times n}$ be a $B$-matrix with the form
\begin{equation}\label{dec} M =B^++C,\end{equation} where
\begin{equation}  \label{neq1.3} B^+ =[b_{ij}]= \left[ \begin{array}{ccc}
 m_{11}-r_1^+    &\cdots     &m_{1n}-r_1^+ \\
 \vdots          &          &\vdots  \\
 m_{n1}-r_n^+     &\cdots    &m_{nn}-r_n^+
\end{array} \right], \end{equation}
and $r_i^+=\max\{ 0,m_{ij}|j\neq i\}$. Then
\begin{equation} \label{neq1.4} \max\limits_{d\in [0,1]^n}||(I
-D+DM)^{-1}||_{\infty}\leq
\frac{n-1}{\min\{\beta,1\}},\end{equation} where
$\beta=\min\limits_{i\in N}\{\beta_i\}$ and
$\beta_i=b_{ii}-\sum\limits_{j\neq i}|b_{ij}|$.
\end{thm}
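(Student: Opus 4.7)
The plan is to split off the rank-one part of the $B^+ + C$ decomposition and exploit the $M$-matrix structure of what remains. Write $M_D := I - D + DM = A_D + (Dr^+)e^T$, where $A_D := I - D + DB^+$, $r^+ := (r_1^+,\ldots,r_n^+)^T$, and $e := (1,\ldots,1)^T$; this uses the observation that each row of $C$ is constant, so $C = r^+ e^T$ has rank one.

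First I would verify that $A_D$ is a strictly diagonally dominant $M$-matrix. The off-diagonals of $B^+$ are $\leq 0$ by definition of $r_i^+$, and the $B$-matrix condition (\ref{neq1.2}), rewritten as $\sum_k m_{ik} > n r_i^+$, forces each $\beta_i = \sum_j b_{ij}$ to be positive. A short calculation shows that the $i$-th row of $A_D$ has diagonal dominance $1 - d_i + d_i \beta_i \geq \min\{1,\beta\}$, so the Varah bound yields $\|A_D^{-1}\|_\infty \leq 1/\min\{1,\beta\}$, and $A_D^{-1}\geq 0$ entrywise.

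Next I would handle the rank-one perturbation in coordinates. Fix $x$ with $\|x\|_\infty = 1$ and set $y := M_D^{-1} x$, $z := A_D^{-1} x$, $w := A_D^{-1} D r^+ \geq 0$. From $M_D y = x$ one obtains $A_D y = x - (e^T y) Dr^+$, so $y = z - (e^T y) w$; applying $e^T$ yields $e^T y = (e^T z)/(1 + e^T w)$, and substituting back gives
$$y_i \;=\; \frac{z_i(1 + W_i) - w_i Z_i}{1 + W_i + w_i}, \qquad W_i := \sum_{k \neq i} w_k, \ \ Z_i := \sum_{k \neq i} z_k.$$

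To finish, I would estimate $|y_i|$ using $w_i, W_i \geq 0$, $|z_i| \leq \|z\|_\infty$, and $|Z_i| \leq (n-1)\|z\|_\infty$ to obtain $|y_i| \leq \|z\|_\infty(1 + W_i + (n-1)w_i)/(1 + W_i + w_i) \leq (n-1)\|z\|_\infty$, which combined with $\|z\|_\infty \leq 1/\min\{1,\beta\}$ yields the claim. The main subtlety I anticipate is producing the sharper factor $n-1$ rather than the naive $n$: this depends on the algebraic cancellation that eliminates the $k=i$ term from the rank-one correction (leaving $Z_i$, a sum of only $n-1$ entries of $z$) together with the nonnegativity of $w$, which keeps the worst-case numerator and denominator in the final ratio aligned.
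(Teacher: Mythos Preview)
Your argument is correct and follows essentially the same route as the original proof in \cite{Ga} (alluded to in this paper's proof of Theorem~\ref{main}): one checks that $A_D=B_D^+$ is a strictly diagonally dominant $M$-matrix, applies the Varah bound to obtain $\|(B_D^+)^{-1}\|_\infty\le 1/\min\{\beta,1\}$, and then exploits the rank-one structure $C_D=(Dr^+)e^T$ together with $w=(B_D^+)^{-1}Dr^+\ge 0$ to produce the factor $n-1$. The only cosmetic difference is that \cite{Ga} writes $M_D^{-1}=(I+(B_D^+)^{-1}C_D)^{-1}(B_D^+)^{-1}$ and bounds $\|(I+we^T)^{-1}\|_\infty\le n-1$ at the matrix level via submultiplicativity, whereas you perform the equivalent Sherman--Morrison computation componentwise on a test vector.
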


As shown in \cite{Li}, if the diagonal dominance of $B^+$ is weak,
i.e.,
\[\beta=\min\limits_{i\in N}\{\beta_i\}=\min\limits_{i\in
N}\left\{b_{ii}-\sum\limits_{j\neq i}|b_{ij}|\right\}\] is small,
 then the bound (\ref{neq1.4})  may be very large when $M$ is
a $B$-matrix, which leads to that the estimate  in (\ref{neq1.4}) is
always inaccurate, for details, see \cite{Li,Li1}. To improve the
bound (\ref{neq1.4}), Li and Li \cite{Li} gave the following  bound
for $\max\limits_{d\in [0,1]^n}||(I -D+DM)^{-1}||_{\infty}$ when $M$
is a $B$-matrix.

\begin{thm}\emph{\cite[Theorem 4]{Li} }   \label{th1.2} Let $M=[m_{ij}]\in R^{n\times n}$ be a $B$-matrix with the
form $ M=B^++C$, where $B^+=[b_{ij}]$ is the matrix of
(\ref{neq1.3}). Then
\begin{equation} \label{neq1.5}  \max\limits_{d\in [0,1]^n}||(I
-D+DM)^{-1}||_{\infty} \leq\sum\limits_{i=1}^n
\frac{n-1}{\min\left\{ \bar{\beta}_i, 1\right\}}
\prod\limits_{j=1}^{i-1} \left(1+ \frac{1}{\bar{\beta}_j}
\sum\limits_{k=j+1}^n|b_{jk}| \right),\end{equation} where
$\bar{\beta}_i=b_{ii}-\sum\limits_{j=i+1}^n|b_{ij}|l_i(B^+) $,
$l_k(B^+)= \max\limits_{k\leq i\leq n}\left\{ \frac{1}{|b_{ii}|}
\sum\limits_{j=k,\atop j\neq i}^n |b_{ij}| \right\} $ and
\[\prod\limits_{j=1}^{i-1} \left(1+ \frac{1}{\bar{\beta}_j}
\sum\limits_{k=j+1}^n|b_{jk}| \right) =1 ~if~ i=1.\]
\end{thm}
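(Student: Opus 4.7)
The plan is to reduce the estimate for $M_D := I - D + DM$ to the auxiliary matrix $B^+_D := I - D + DB^+$ and then to bound the latter by a refined, LU-flavoured diagonal-dominance argument. Decompose $M_D = B^+_D + DC$; since each row of $C$ equals $(r_i^+,\ldots,r_i^+)$, the perturbation $DC$ is rank one with nonnegative entries, which is exactly the structure already exploited in the proof of Theorem~\ref{th1.1}. Adapting that argument yields the reduction
\[
\max_{d\in[0,1]^n}\|M_D^{-1}\|_\infty \le (n-1)\,\max_{d\in[0,1]^n}\|(B^+_D)^{-1}\|_\infty,
\]
so it suffices to estimate $\|(B^+_D)^{-1}\|_\infty$ uniformly in $D$.

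The matrix $B^+_D$ is an SDD $Z$-matrix: its off-diagonals are $d_i b_{ij}\le 0$, while row $i$ satisfies $1-d_i+d_i\beta_i \ge \min\{1,\beta_i\}$. The Varah-style estimate $1/\min\{1,\beta\}$ would simply recover Theorem~\ref{th1.1}; to obtain (\ref{neq1.5}) I would instead apply to $B^+_D$ the LU-type inequality
\[
\|A^{-1}\|_\infty \le \sum_{i=1}^n \frac{1}{\tilde\beta_i(A)} \prod_{j=1}^{i-1}\left(1+\frac{1}{\tilde\beta_j(A)}\sum_{k=j+1}^n|a_{jk}|\right),
\]
where $\tilde\beta_i(A)=a_{ii}-\sum_{k=i+1}^n|a_{ik}|\,l_i(A)$, valid for any SDD $Z$-matrix $A$. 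This inequality is the engine behind Theorem~\ref{th1.2}; it is established by induction on $n$ that mimics one step of Gaussian elimination, with $l_i(A)$ precisely bounding the off-diagonal row sums of the Schur complement at each stage.

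Applied to $A=B^+_D$, the diagonals become $1-d_i+d_ib_{ii}$ and the off-diagonal row sums shrink by the factor $d_i\le 1$. Routine monotonicity then gives $\tilde\beta_i(B^+_D)\ge 1-d_i+d_i\bar\beta_i\ge \min\{1,\bar\beta_i\}$, and the bracketed products inherit a bound phrased in terms of $\bar\beta_j$ and $\sum_{k=j+1}^n|b_{jk}|$. Substituting into the displayed inequality and multiplying by $(n-1)$ from the reduction step yields (\ref{neq1.5}).

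The main technical obstacle is the inductive step of the LU-type bound: one must verify that after one round of elimination on an SDD $Z$-matrix $A$, the reduced $(n-1)\times(n-1)$ matrix retains strict diagonal dominance with a quantitative loss captured exactly by $l_i(A)$, so that the product telescopes cleanly. A secondary delicate point is the uniformity in $d\in[0,1]^n$ in the last step: both $\tilde\beta_i$ and the bracketed terms must be shown to be monotone under the replacement $B^+\mapsto B^+_D$, so that the worst case is realised (up to the $\min\{\cdot,1\}$ correction) at $D=I$.
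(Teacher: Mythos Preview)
This theorem is not proved in the present paper; it is quoted from \cite[Theorem~4]{Li}, so there is no in-paper proof to compare against directly. Your outline is nonetheless correct and mirrors the architecture the paper uses for its own Theorem~\ref{main}: the reduction $\|M_D^{-1}\|_\infty\le(n-1)\|(B_D^+)^{-1}\|_\infty$ is exactly (\ref{neq2.3}); the monotonicity $l_i(B_D^+)\le l_i(B^+)$ and the estimate $\tilde\beta_i(B_D^+)\ge 1-d_i+d_i\bar\beta_i\ge\min\{1,\bar\beta_i\}$ follow from Lemma~\ref{nle2.2} just as you describe; and the bracketed product factors are controlled by the second inequality of that same lemma.

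The one point worth flagging is that the present paper, to prove the sharper Theorem~\ref{main}, bypasses your LU-type inequality and instead applies Wang's bound (Lemma~\ref{nle2.1}) directly to $B_D^+$. Rewriting Lemma~\ref{nle2.1} via $a_{ii}(1-u_i l_i)=\tilde\beta_i(A)$ and $(1-u_j l_j)^{-1}=a_{jj}/\tilde\beta_j(A)$ gives $\|A^{-1}\|_\infty\le\sum_i\tilde\beta_i(A)^{-1}\prod_{j<i}a_{jj}/\tilde\beta_j(A)$, which is termwise no larger than the inequality you display because $a_{jj}/\tilde\beta_j(A)\le 1+\tilde\beta_j(A)^{-1}\sum_{k>j}|a_{jk}|$ whenever $l_j(A)<1$; this comparison is precisely what the paper exploits in Theorem~5. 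So your engine is the appropriate one for Theorem~\ref{th1.2} and your uniformity-in-$D$ argument on top is the same one the paper runs, just fed by a slightly weaker base inequality than the one used for the paper's own main result.
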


Very recently, when $M$ is a weakly chained diagonally dominant
$B$-matrix,  Li and Li \cite{Li1} gave a bound for
$\max\limits_{d\in [0,1]^n}||(I -D+DM)^{-1}||_{\infty}$. This bound
holds true for the case that $M$ is a $B$-matrix because a
$B$-matrix is a weakly chained diagonally dominant $B$-matrix
\cite{Li1}.

\begin{thm} \emph{\cite[Corollary 1]{Li1} } \label{th1.3}
Let $M=[m_{ij}]\in R^{n\times n}$ be a $B$-matrix with the form $
M=B^++C$, where $B^+=[b_{ij}]$ is the matrix of (\ref{neq1.3}). Then
\[\max\limits_{d\in [0,1]^n}||(I
-D+DM)^{-1}||_{\infty} \leq \sum\limits_{i=1}^{n}
\left(\frac{n-1}{\min\{\tilde{\beta}_i,1\}} \prod \limits_{j =1}^{
i-1} \frac{b_{jj}}{\tilde{\beta}_j} \right),\] where
$\tilde{\beta}_i=b_{ii}-\sum\limits_{j= i+1}^{n}|b_{ij}|
>0$ and $ \prod \limits_{j =1}^{
i-1} \frac{b_{jj}}{\tilde{\beta}_j} =1$ if $i=1.$
\end{thm}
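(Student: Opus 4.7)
My plan is to follow the two-stage Garc\'ia-Esnaola--Pe\~na strategy used in Theorem~\ref{th1.1}, but to replace their coarse bound on $\|(I-D+DB^+)^{-1}\|_\infty$ with the refined Varah-type estimate proved in \cite{Li1}.

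First I would write $I - D + DM = B_D + DC$, where $B_D := I - D + DB^+$ and $DC = (Dr^+)\mathbf{1}^T$ is rank one because $C = r^+\mathbf{1}^T$ with $r^+ = (r_1^+,\ldots,r_n^+)^T$. Since $b_{ij} = m_{ij} - r_i^+ \leq 0$ for $j\neq i$ and $b_{ii} > 0$, the matrix $B_D$ is a $Z$-matrix with positive diagonal, and a direct calculation gives
$$(B_D)_{ii} - \sum_{j=i+1}^n |(B_D)_{ij}| = 1 + d_i(\tilde{\beta}_i - 1) \geq \min\{\tilde{\beta}_i,1\}$$
uniformly in $d_i\in[0,1]$. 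In particular $B_D$ is an $M$-matrix, so $B_D^{-1}\geq 0$ entrywise.

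The next step is to apply the Sherman--Morrison identity to the rank-one perturbation, which gives
$$(I-D+DM)^{-1} = B_D^{-1} - \frac{B_D^{-1}(Dr^+)\mathbf{1}^T B_D^{-1}}{1 + \mathbf{1}^T B_D^{-1}(Dr^+)}.$$
Nonnegativity of $B_D^{-1}$ and $Dr^+$ forces the denominator to be at least $1$, and the same row-sum analysis used in \cite{Ga} then produces the multiplicative factor $n-1$, reducing the problem to the inner bound $\|B_D^{-1}\|_\infty \leq \sum_{i=1}^n \min\{\tilde{\beta}_i,1\}^{-1}\prod_{j<i} b_{jj}/\tilde{\beta}_j$. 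I would establish this by induction on $n$, passing at each step to the Schur complement of the $(1,1)$ block of $B_D$: the first row is strictly diagonally dominant with slack at least $\min\{\tilde{\beta}_1,1\}$, contributing the leading term of the sum, and after eliminating the first column the upper-triangular off-diagonal magnitudes grow by at most the factor $b_{11}/\tilde{\beta}_1$, so the inductive hypothesis applied to the $(n-1)\times(n-1)$ Schur complement produces the telescoping product.

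The main obstacle is this Schur-complement induction: one must verify that the new off-diagonal magnitudes after elimination are amplified by exactly $b_{11}/\tilde{\beta}_1$ and no more, and that the ``$\min$ with $1$'' coming from the shift $I-D$ interacts correctly with the inductive reduction so that the bound does not degrade across the inductive step. A secondary delicate point is the factor $n-1$, which does not follow directly from Sherman--Morrison but requires a componentwise bookkeeping that exploits the constant-row structure of $DC$.
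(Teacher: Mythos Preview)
The paper does not prove Theorem~\ref{th1.3} (it is quoted from \cite{Li1}), but the proof of the sharper Theorem~\ref{main} shows the method that \cite{Li1} uses: factor $M_D=B_D^+(I+(B_D^+)^{-1}C_D)$, invoke \cite{Ga} for $\|(I+(B_D^+)^{-1}C_D)^{-1}\|_\infty\le n-1$, apply a ready-made $\|A^{-1}\|_\infty$ estimate to the SDD $M$-matrix $B_D^+$, and then strip out the $d$-dependence \emph{term by term} via the scalar inequalities of Lemmas~\ref{nle2.2} and~\ref{nle2.3}. Your outer structure (split off $DC$, extract the factor $n-1$, then bound $\|B_D^{-1}\|_\infty$) matches this; the differences are that you run Sherman--Morrison directly on $M_D$ rather than on $I+(B_D^+)^{-1}C_D$, and that you propose to manufacture the inner bound by a Schur-complement induction instead of citing a norm bound and applying the two lemmas.

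The Schur-complement induction, as you describe it, does not work. After eliminating the first column, the $(i,j)$ entry of the Schur complement is $(B_D)_{ij}-(B_D)_{i1}(B_D)_{1j}/(B_D)_{11}$, so the perturbation in row $i$ is controlled by the \emph{first-column} magnitude $|(B_D)_{i1}|=d_i|b_{i1}|$, not by the first-row data that constitute $\tilde\beta_1$. Concretely,
\[
\tilde\beta_i(S)=\tilde\beta_i(B_D)-\frac{|(B_D)_{i1}|}{(B_D)_{11}}\sum_{j\ge i}|(B_D)_{1j}|,
\qquad
S_{ii}=(B_D)_{ii}-\frac{|(B_D)_{i1}|\,|(B_D)_{1i}|}{(B_D)_{11}},
\]
so neither the upper-triangular magnitudes nor the ratios $S_{ii}/\tilde\beta_i(S)$ are uniform rescalings by $b_{11}/\tilde\beta_1$; the telescoping product $\prod_{j<i}b_{jj}/\tilde\beta_j$ does not emerge from this mechanism. (A related slip: the displayed upper-triangular dominance $(B_D)_{ii}>\sum_{j>i}|(B_D)_{ij}|$ alone does not make $B_D$ an $M$-matrix; you need the full SDD of $B^+$ from \cite{Ga}.)

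The route in \cite{Li1} and in Theorem~\ref{main} avoids all of this. One applies a fixed inequality of the form $\|A^{-1}\|_\infty\le\sum_i\bigl(a_{ii}-\sum_{j>i}|a_{ij}|\bigr)^{-1}\prod_{j<i}a_{jj}\bigl(a_{jj}-\sum_{k>j}|a_{jk}|\bigr)^{-1}$ to $A=B_D^+$; the $d_i$'s then sit only inside the individual factors $1/(1-d_i+b_{ii}d_i-d_i\sum_{j>i}|b_{ij}|)$ and $(1-d_j+b_{jj}d_j)/(1-d_j+b_{jj}d_j-d_j\sum_{k>j}|b_{jk}|)$, which Lemmas~\ref{nle2.2} and~\ref{nle2.3} bound by $1/\min\{\tilde\beta_i,1\}$ and $b_{jj}/\tilde\beta_j$ respectively. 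No induction on $n$ is needed, and the lower-triangular entries of $B^+$ never enter the estimate.
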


In this paper, we also focus on the  error bound for the LCP$(M,
q)$, and gave a new bound for  $\max\limits_{d\in [0,1]^n}||(I
-D+DM)^{-1}||_{\infty}$ when $M$ is a $B$-matrix. It is shown that
this bound is more effective to estimate  $\max\limits_{d\in
[0,1]^n}||(I -D+DM)^{-1}||_{\infty}$ than that in Theorem
\ref{th1.1}, and sharper than those in Theorems \ref{th1.2} and
\ref{th1.3}.

%%%%%%%%%%%%%%%%%%%%%%%%%%%%%%%%%%%%%%%%%%%%%%%%%%%%%%%%%%%%%%%

%%%%%%%%%%%%%%%%%%%%%%%%%%%%%%%%%%%%%%%%%%%%%%%%%%%
\section{Main results}\vspace{-2pt}
We first recall some definitions. A matrix $A=[a_{ij}]\in C^{n\times
n}$ is called a strictly diagonally dominant ($SDD$) matrix if for
each $i\in N$, $ |a_{ii}|> \sum\limits_{j=1,\atop j\neq i}^n
|a_{ij}|$. It is well-known that an $SDD$ matrix is nonsingular
\cite{Ber}. A matrix $A=[a_{ij}]$ is called a $Z$-matrix if
$a_{ij}\leq 0$ for any $i\neq j$, and a nonsingular $M$-matrix if
$A$ is a $Z$-matrix with $A^{-1}$ being nonnegative \cite{Ber}.
Next, several lemmas which will be used later are given.

\begin{lem}\cite[Theorem 3.2]{Wang} \label{nle2.1}
Let $A=[a_{ij}]\in R^{n\times n}$ be an $SDD$ $M$-matrix. Then
\[||A^{-1}||_\infty \leq \sum\limits_{i=1}^n\left(\frac{1}{a_{ii}(1-u_i(A)l_i(A))}
 \prod\limits_{j=1}^{i-1} \frac{1}{1-u_j(A)l_j(A)}\right)
 ,\]
where $u_i(A)=\frac{1}{|a_{ii}|}  \sum\limits_{j=i+1}^n |a_{ij}|$,
$l_k(A)=\max\limits_{k\leq i\leq n}\left\{ \frac{1}{|a_{ii}|}
\sum\limits_{j=k,\atop j\neq i}^n |a_{ij}| \right\}$ and
\[\prod\limits_{j=1}^{i-1} \frac{1}{1-u_j(A)l_j(A)}=1 ~if ~i=1.\]
\end{lem}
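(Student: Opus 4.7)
The plan is to prove the bound by induction on $n$ via partial Gaussian elimination (Schur complementation). Two preliminary observations set up the argument. First, strict diagonal dominance gives $u_j(A)<1$ and $l_j(A)<1$ for every $j\in N$, so each factor $1-u_j(A)l_j(A)$ in the denominator of the claimed bound is strictly positive. Second, since $A$ is additionally an $M$-matrix, $A^{-1}\geq 0$, and therefore
\[\|A^{-1}\|_{\infty}=\|A^{-1}e\|_{\infty}=\max_{1\leq i\leq n}(A^{-1}e)_i,\]
where $e=(1,\ldots,1)^{T}$. The task thus reduces to estimating the entries of the nonnegative vector $A^{-1}e$ componentwise.

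For the inductive step, I would partition
\[A=\begin{pmatrix} a_{11} & a^{T} \\ b & A'\end{pmatrix},\quad a=(a_{12},\ldots,a_{1n})^{T},\ b=(a_{21},\ldots,a_{n1})^{T},\]
and form the Schur complement $S=A'-a_{11}^{-1}ba^{T}$. It is classical that $S$ is again an SDD $M$-matrix. The core technical step is a perturbation estimate on the row quantities: one shows that $u_j(S)$ and $l_j(S)$ for $j=2,\ldots,n$ are controlled by $u_j(A)$ and $l_j(A)$ in a manner compatible with the factor $1/(1-u_1(A)l_1(A))$ that will appear in front of the telescoping tail. These bounds are obtained by the entrywise estimates $|a_{k1}|\leq l_1(A)|a_{kk}|$ for $k\geq 2$ and $|a_{1l}|\leq u_1(A)\,a_{11}$ for $l\geq 2$, combined with the SDD property of $A$.

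The assembly uses the block-inverse identity
\[A^{-1}=\begin{pmatrix} a_{11}^{-1}+a_{11}^{-1}a^{T}S^{-1}ba_{11}^{-1} & -a_{11}^{-1}a^{T}S^{-1} \\ -a_{11}^{-1}S^{-1}ba_{11}^{-1} & S^{-1}\end{pmatrix}\]
applied to $e$. Bounding the first component isolates the $i=1$ term $\frac{1}{a_{11}(1-u_1(A)l_1(A))}$, while the last $n-1$ components, by the induction hypothesis applied to $S$, contribute the tail of the sum multiplied by an overall factor $1/(1-u_1(A)l_1(A))$; together these pieces reassemble into the claimed telescoping-product bound.

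The main obstacle is the second step. While the SDD and $M$-matrix properties transfer to $S$ without difficulty, the quantity $l_j$ is a maximum over rows $k\geq j$ and Schur complementation can enlarge individual off-diagonal entries of $A'$. Showing that the relevant maxima for $S$ are still controlled by $l_j(A)$ and $u_j(A)$, so that the induction closes to exactly the stated product form rather than leaving spurious extra factors, requires a careful row-by-row bookkeeping; this is where the genuine technical content of the argument concentrates.
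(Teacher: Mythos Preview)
The paper does not prove this lemma at all: it is quoted verbatim as \cite[Theorem~3.2]{Wang} and used as a black box in the proof of Theorem~\ref{main}. So there is no ``paper's own proof'' to compare against; any argument you supply is necessarily extra.

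On your sketch itself: the overall plan (use $A^{-1}\ge 0$ to reduce $\|A^{-1}\|_\infty$ to $\|A^{-1}e\|_\infty$, peel off the first row/column, and induct on the Schur complement) is a natural route to a telescoping-product bound of this shape, and your identification of the hard step is accurate. Two concrete issues are worth flagging. First, your block-inverse formula has a typo: the $(2,1)$ block should be $-S^{-1}b\,a_{11}^{-1}$, not $-a_{11}^{-1}S^{-1}b\,a_{11}^{-1}$; with $a_{11}$ scalar this is an extra factor of $a_{11}^{-1}$, not merely a harmless reordering. Second, and more substantively, for an $M$-matrix the Schur complement satisfies $|s_{ij}|=|a_{ij}|+|a_{i1}||a_{1j}|/a_{11}\ge |a_{ij}|$ off the diagonal while $s_{ii}\le a_{ii}$, so both $u_j(S)$ and $l_j(S)$ can exceed $u_j(A)$ and $l_j(A)$. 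The induction hypothesis therefore gives a bound in terms of $u_j(S),l_j(S)$, and you must show that
\[
\frac{1}{s_{ii}\bigl(1-u_i(S)l_i(S)\bigr)}\prod_{j=2}^{i-1}\frac{1}{1-u_j(S)l_j(S)}
\;\le\;
\frac{1}{1-u_1(A)l_1(A)}\cdot\frac{1}{a_{ii}\bigl(1-u_i(A)l_i(A)\bigr)}\prod_{j=2}^{i-1}\frac{1}{1-u_j(A)l_j(A)}
\]
for every $i\ge 2$. Your outline asserts this closure but does not indicate how the single prefactor $1/(1-u_1(A)l_1(A))$ absorbs the degradation in \emph{every} later factor simultaneously; that is precisely the bookkeeping you would have to carry out, and it is not obvious it goes through in this exact form. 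If you intend to reconstruct Wang's result, it would be safer to consult \cite{Wang} directly, where the argument is likely organized around the linear system $Ax=e$ and a row-wise recursion rather than Schur complements.
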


\begin{lem} \cite[Lemma 3]{Li} \label{nle2.2}
Let $\gamma > 0$ and $ \eta \geq 0 $. Then for any $x\in [0,1]$,
\begin{equation} \label{neq2.1} \frac{1}{1-x+\gamma x} \leq  \frac{1}{\min\{\gamma,1\}}\end{equation} and
\begin{equation} \label{neq2.2} \frac{\eta x}{1-x+\gamma x} \leq  \frac{\eta }{\gamma}.\end{equation}
\end{lem}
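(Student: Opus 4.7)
The plan is to treat the two inequalities (\ref{neq2.1}) and (\ref{neq2.2}) separately, using the rewriting $1-x+\gamma x = 1+(\gamma-1)x$. The key observation is that this expression is an affine function of $x$, and hence attains its minimum over $[0,1]$ at one of the endpoints.

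For (\ref{neq2.1}), I would evaluate the denominator at the two endpoints: at $x=0$ it equals $1$, and at $x=1$ it equals $\gamma$. By affinity (or equivalently by splitting into the two cases $\gamma\geq 1$ and $0<\gamma<1$), this gives $1-x+\gamma x \geq \min\{\gamma,1\}>0$ for every $x\in[0,1]$. Taking reciprocals, which reverses the inequality but is legal since both sides are strictly positive, yields (\ref{neq2.1}) immediately.

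For (\ref{neq2.2}), since the denominator $1-x+\gamma x$ is strictly positive on $[0,1]$ (by the previous step) and $\gamma>0$, cross-multiplying reduces the claim to the equivalent statement $\gamma\eta x \leq \eta(1-x+\gamma x)$. Expanding the right-hand side and cancelling the common term $\gamma\eta x$, this simplifies to $0\leq \eta(1-x)$, which is immediate from $\eta\geq 0$ and $x\in[0,1]$.

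There is essentially no obstacle here: both parts are one-line manipulations once the denominator is recognized as affine in $x$. The only point requiring care is verifying that $1-x+\gamma x$ stays strictly positive on the whole interval $[0,1]$, so that reciprocation and cross-multiplication preserve the direction of the inequalities; this positivity is automatic from the hypothesis $\gamma>0$.
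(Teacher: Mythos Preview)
Your argument is correct in both parts. The paper does not actually supply its own proof of this lemma; it simply quotes the result from \cite[Lemma~3]{Li}, so there is nothing to compare your approach against here.
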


\begin{lem} \cite[Lemma 5]{Li1} \label{nle2.3}
Let $A= [a_{ij}]\in R^{n\times n}$ with \[a_{ii}>\sum\limits_{j=
i+1}^{n}|a_{ij}| ~ for ~each ~i\in N.\] Then for any $x_i\in [0,1]$,
$i\in N$, \[\frac{1-x_i+a_{ii}x_i}{1-x_i+a_{ii}x_i-\sum\limits_{j=
i+1}^{n}|a_{ij}|x_i} \leq  \frac{a_{ii}}{  a_{ii}-\sum\limits_{j=
i+1}^{n}|a_{ij}|}.\]
\end{lem}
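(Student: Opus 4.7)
The plan is to verify this purely by algebraic manipulation; the hypothesis $a_{ii}>\sum_{j=i+1}^n|a_{ij}|$ plays only a positivity role, and no clever trick or auxiliary result is needed.

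Write $s_i=\sum_{j=i+1}^n|a_{ij}|\ge 0$, so by hypothesis $a_{ii}-s_i>0$. First I would check that both denominators appearing in the claim are strictly positive. The right-hand denominator is $a_{ii}-s_i>0$ directly. The left-hand denominator rearranges as $(1-x_i)+(a_{ii}-s_i)x_i$, which is an affine function of $x_i$ equal to $1$ at $x_i=0$ and to $a_{ii}-s_i>0$ at $x_i=1$; hence it is a convex combination of two positive numbers, and so is strictly positive on the whole interval $[0,1]$.

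Once positivity is in hand, I would clear denominators to see that the inequality is equivalent to
\[
(a_{ii}-s_i)\bigl(1-x_i+a_{ii}x_i\bigr)\le a_{ii}\bigl(1-x_i+a_{ii}x_i-s_ix_i\bigr).
\]
Expanding both sides and cancelling the common terms $a_{ii}(1-x_i)$ and $a_{ii}^2x_i-a_{ii}s_ix_i$, the claim reduces to the trivial inequality
\[
s_i(1-x_i)\ge 0,
\]
which holds because $s_i\ge 0$ and $x_i\in[0,1]$.

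The main source of difficulty, insofar as there is any, is merely bookkeeping in the expansion above, together with confirming that the positivity check on the left-hand denominator can be carried out using only $a_{ii}>s_i$ rather than some stronger full diagonal dominance assumption. No other hypothesis is used; in particular the entries $a_{ij}$ with $j<i$ never enter the computation.
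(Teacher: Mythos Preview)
Your argument is correct: after noting that both denominators are strictly positive on $[0,1]$, cross-multiplying reduces the inequality to $s_i(1-x_i)\ge 0$, exactly as you show. The paper itself does not supply a proof of this lemma but simply quotes it from \cite{Li1}, so there is nothing further to compare; your direct algebraic verification is the natural (and essentially only) route.
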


The main result is given as follows.

\begin{thm} \label{main} Let $M=[m_{ij}]\in R^{n\times n}$ be a $B$-matrix with the
form $ M=B^++C$, where $B^+=[b_{ij}]$ is the matrix of
(\ref{neq1.3}). Then
\begin{equation} \label{neq2.0}  \max\limits_{d\in [0,1]^n}||(I
-D+DM)^{-1}||_{\infty} \leq \sum\limits_{i=1}^n
\frac{n-1}{\min\left\{ \bar{\beta}_i, 1\right\}}
\prod\limits_{j=1}^{i-1} \frac{b_{jj}}{\bar{\beta}_j}
,\end{equation} where $\bar{\beta}_i $ is defined in Theorem
\ref{th1.2} and $\prod\limits_{j=1}^{i-1}
\frac{b_{jj}}{\bar{\beta}_j}=1$ if $i=1$.
\end{thm}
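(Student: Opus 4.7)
The plan is to follow the Garcia-Esnaola-Peña splitting behind Theorem~\ref{th1.1}, reduce matters to an estimate of $\|(B^+_D)^{-1}\|_\infty$ where $B^+_D := I-D+DB^+$, and then apply Lemma~\ref{nle2.1} while using Lemmas~\ref{nle2.2} and \ref{nle2.3} to eliminate the dependence on $d$.

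Write $M_D := I-D+DM = B^+_D + DC$. Since $b_{ij}\le 0$ for $i\ne j$ and $B^+$ is $SDD$ with positive diagonal, $B^+_D$ is an $SDD$ $M$-matrix and hence $(B^+_D)^{-1}\ge 0$. The structural point is that every row of $C$ is the constant $r_i^+\mathbf{1}^T$, so $DC = \tilde{c}\,\mathbf{1}^T$ with $\tilde{c} := D\mathbf{r}^+\ge 0$ is rank one. By Sherman-Morrison,
\[
M_D^{-1} = (B^+_D)^{-1} - \frac{z\,\mathbf{1}^T(B^+_D)^{-1}}{1+\mathbf{1}^T z},\qquad z := (B^+_D)^{-1}\tilde{c}\ge 0.
\]
Setting $\mu_i := z_i/(1+\mathbf{1}^T z)\in[0,1]$ and $A := (B^+_D)^{-1}\ge 0$, the $(i,j)$-entry of $M_D^{-1}$ equals $(1-\mu_i)A_{ij} - \mu_i\sum_{k\ne i}A_{kj}$, whose absolute value is at most $(1-\mu_i)A_{ij}+\mu_i\sum_{k\ne i}A_{kj}$. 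Summing over $j$ gives the row-$i$ bound $(1-\mu_i)R_i+\mu_i\sum_{k\ne i}R_k \le (1+\mu_i(n-2))\|A\|_\infty \le (n-1)\|A\|_\infty$, where $R_k$ denotes the $k$-th row sum of $A$. Hence $\|M_D^{-1}\|_\infty \le (n-1)\|(B^+_D)^{-1}\|_\infty$.

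It then remains to estimate $\|(B^+_D)^{-1}\|_\infty$ via Lemma~\ref{nle2.1}. Using $(B^+_D)_{ii}=1-d_i+d_ib_{ii}$, $(B^+_D)_{ij}=d_ib_{ij}$ for $i\ne j$, and inequality~(\ref{neq2.2}) of Lemma~\ref{nle2.2}, one gets $l_k(B^+_D)\le l_k(B^+)$ for each $k$, and therefore
\[
(B^+_D)_{ii}\bigl(1-u_i(B^+_D)l_i(B^+_D)\bigr) \;\ge\; 1-d_i+d_i\bar{\beta}_i,
\]
whose reciprocal is bounded by $1/\min\{\bar{\beta}_i,1\}$ via (\ref{neq2.1}) of Lemma~\ref{nle2.2}. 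For each product factor I would then write
\[
\frac{1}{1-u_j(B^+_D)l_j(B^+_D)} \;\le\; \frac{1-d_j+d_jb_{jj}}{1-d_j+d_j\bar{\beta}_j},
\]
and apply Lemma~\ref{nle2.3} with diagonal $b_{jj}$ and off-diagonal sum $b_{jj}-\bar{\beta}_j = \sum_{k=j+1}^n|b_{jk}|l_j(B^+)$ (which is $<b_{jj}$ since $B^+$ is $SDD$ and $l_j(B^+)<1$) to bound this ratio by $b_{jj}/\bar{\beta}_j$. Multiplying the resulting estimate of $\|(B^+_D)^{-1}\|_\infty$ by $(n-1)$ yields~(\ref{neq2.0}).

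The main obstacle is the first step: extracting the clean factor $(n-1)$ from the Sherman-Morrison identity requires the entrywise sign analysis above rather than any coarser norm-of-a-product bound; once that reduction is in place, the rest is essentially a substitution exercise in Lemmas~\ref{nle2.1}-\ref{nle2.3}.
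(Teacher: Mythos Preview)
Your proposal is correct and follows essentially the same route as the paper: reduce to $\|(B_D^+)^{-1}\|_\infty$ via the rank-one structure of $DC$, then feed $B_D^+$ into Lemma~\ref{nle2.1} and strip out the $d$-dependence with Lemmas~\ref{nle2.2} and~\ref{nle2.3}. The only cosmetic difference is that the paper obtains the factor $(n-1)$ by citing the argument of \cite[Theorem~2.2]{Ga} in the form $\|M_D^{-1}\|_\infty \le \|(I+(B_D^+)^{-1}C_D)^{-1}\|_\infty\,\|(B_D^+)^{-1}\|_\infty \le (n-1)\|(B_D^+)^{-1}\|_\infty$, whereas you unpack the same Sherman--Morrison computation explicitly; the two are the same argument.
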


\begin{proof} Let $M_D=I-D+DM$. Then
\[M_D=I-D+DM=I-D+D(B^++C)=B_D^++C_D,\] where
$B_D^+=I-D+DB^+=[b_{ij}]$. Similar to the proof of Theorem 2.2 in
\cite{Ga}, we can obtain that $B_D^+$ is an $SDD$ $M$-matrix with
positive diagonal elements  and $C_D=DC$, and that
\begin{equation} \label{neq2.3}||M_D^{-1}||_\infty \leq ||
(I+(B_D^+)^{-1}C_D)^{-1}||_\infty ||( B_D^+)^{-1}||_\infty \leq
(n-1)||( B_D^+)^{-1}||_\infty.\end{equation} By Lemma \ref{nle2.1},
\begin{eqnarray} \label{neq2.4}||( B_D^+)^{-1}||_\infty &\leq & \sum\limits_{i=1}^n \left(
\frac{1}{(1-d_i+b_{ii}d_i)\left(1- u_i(B_D^+) l_i(B_D^+)\right)}
\prod\limits_{j=1}^{i-1} \frac{1}{1-u_j(B_D^+)l_j(B_D^+)}
\right)\nonumber,
\end{eqnarray}
where
\[u_i(B_D^+)= \frac{\sum\limits_{j=i+1}^n |b_{ij}|d_i}{1-d_i+b_{ii}d_i} ,
and~ l_k(B_D^+)= \max\limits_{k\leq i\leq n}\left\{
\frac{\sum\limits_{j=k,\atop j\neq i}^n
|b_{ij}|d_i}{1-d_i+b_{ii}d_i}
 \right\}. \]
By Lemma \ref{nle2.2}, we can easily get that for each $k\in N$,
\begin{equation} \label{neq2.5}l_k(B_D^+) \leq  \max\limits_{k\leq i\leq n}\left\{
\frac{1}{b_{ii}}\sum\limits_{j=k,\atop j\neq i}^n |b_{ij}| \right\}
= l_k(B^+) < 1, \end{equation} and that for each $i\in N$,
\begin{eqnarray}\label{neq2.6} \nonumber \frac{1}{(1-d_i+b_{ii}d_i)\left(1- u_i(B_D^+)
l_i(B_D^+)\right)}&=&\frac{1}{1-d_i+b_{ii}d_i- \sum\limits_{j=i+1}^n
|b_{ij}|d_i l_i(B_D^+)} \\& \leq &\frac{1}{\min\left\{b_{ii}-
\sum\limits_{j=i+1}^n|b_{ij}| l_i(B^+), 1\right\}}\nonumber\\&=&
\frac{1}{\min\left\{ \bar{\beta}_i, 1\right\}}. \end{eqnarray}
Furthermore, by Lemma \ref{nle2.3},
\begin{equation} \label{neq2.7}\frac{1}{1-u_i(B_D^+)l_i(B_D^+)}=\frac{1-d_i+b_{ii}d_i}{1-d_i+b_{ii}d_i- \sum\limits_{j=i+1}^n |b_{ij}|d_i
l_i(B_D^+)} \leq \frac{b_{ii}}{\bar{\beta}_i}
 .\end{equation} By (\ref{neq2.4}),
(\ref{neq2.5}), (\ref{neq2.6}) and (\ref{neq2.7}), we have
\begin{equation} \label{neq2.8}||( B_D^+)^{-1}||_\infty \leq
\frac{1}{\min\left\{ \bar{\beta}_1, 1\right\}}+ \sum\limits_{i=2}^n
\frac{1}{\min\left\{ \bar{\beta}_i, 1\right\}}
\prod\limits_{j=1}^{i-1}  \frac{b_{jj}}{\bar{\beta}_j} .
\end{equation}
The conclusion follows from (\ref{neq2.3}) and (\ref{neq2.8}).
\end{proof}

The comparisons of the bounds in Theorems \ref{th1.2}, \ref{th1.3}
and \ref{main} are established as follows.

\begin{thm}  Let $M=[m_{ij}]\in R^{n\times n}$ be a $B$-matrix with the
form $ M=B^++C$, where $B^+=[b_{ij}]$ is the matrix of
(\ref{neq1.3}). Let $\bar{\beta}_i $ and  $\tilde{\beta}_i$ be
defined in Theorems \ref{th1.2} and \ref{th1.3} respectively.  Then
\begin{eqnarray*}
 \sum\limits_{i=1}^n \frac{n-1}{\min\left\{
\bar{\beta}_i, 1\right\}} \prod\limits_{j=1}^{i-1}
\frac{b_{jj}}{\bar{\beta}_j}&\leq &\sum\limits_{i=1}^n
\frac{n-1}{\min\left\{ \bar{\beta}_i, 1\right\}}
\prod\limits_{j=1}^{i-1} \left(1+ \frac{1}{\bar{\beta}_j}
\sum\limits_{k=j+1}^n|b_{jk}| \right)\nonumber
\\&\leq&\sum\limits_{i=1}^{n}
\left(\frac{n-1}{\min\{\tilde{\beta}_i,1\}} \prod \limits_{j =1}^{
i-1} \frac{b_{jj}}{\tilde{\beta}_j} \right).
\end{eqnarray*}
\end{thm}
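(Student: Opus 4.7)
The proof is essentially a term-by-term comparison of two sums, with the only real content being the single inequality $l_j(B^+)\le 1$ that already appears in the proof of the main theorem (cf.\ (\ref{neq2.5})). I would organize the argument around the following observations.

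First, I would establish the two comparisons that drive everything:
\begin{equation*}
\bar\beta_j\ \ge\ \tilde\beta_j\ >\ 0,\qquad \text{hence}\qquad \min\{\bar\beta_j,1\}\ \ge\ \min\{\tilde\beta_j,1\}.
\end{equation*}
This follows immediately from the definitions $\bar\beta_j=b_{jj}-\sum_{k=j+1}^n|b_{jk}|\,l_j(B^+)$ and $\tilde\beta_j=b_{jj}-\sum_{k=j+1}^n|b_{jk}|$ together with $0\le l_j(B^+)\le 1$, which in turn comes from $B^+$ being $SDD$ (this is exactly how (\ref{neq2.5}) was used earlier). In particular, the leading factors satisfy $\frac{n-1}{\min\{\bar\beta_i,1\}}\le \frac{n-1}{\min\{\tilde\beta_i,1\}}$, which will handle the outer weights in the second inequality.

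Next I would prove the first inequality of the theorem by comparing the products factor by factor. Since $\bar\beta_j>0$, the desired bound $\frac{b_{jj}}{\bar\beta_j}\le 1+\frac{1}{\bar\beta_j}\sum_{k=j+1}^n|b_{jk}|$ is equivalent, after multiplying through by $\bar\beta_j$, to $b_{jj}\le \bar\beta_j+\sum_{k=j+1}^n|b_{jk}|$, i.e.\ to
\begin{equation*}
\sum_{k=j+1}^n|b_{jk}|\bigl(1-l_j(B^+)\bigr)\ \ge\ 0,
\end{equation*}
which holds because $l_j(B^+)\le 1$. Since every factor on each side is positive, this factor-wise inequality implies the inequality of partial products, and then the inequality of the weighted sums (the $i=1$ term agrees on both sides as an empty product).

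For the second inequality I would observe the algebraic identity
\begin{equation*}
\frac{b_{jj}}{\tilde\beta_j}\ =\ \frac{\tilde\beta_j+\sum_{k=j+1}^n|b_{jk}|}{\tilde\beta_j}\ =\ 1+\frac{1}{\tilde\beta_j}\sum_{k=j+1}^n|b_{jk}|.
\end{equation*}
Combined with $\bar\beta_j\ge\tilde\beta_j>0$ from the first step, this gives $1+\frac{1}{\bar\beta_j}\sum_{k=j+1}^n|b_{jk}|\le \frac{b_{jj}}{\tilde\beta_j}$, so again a factor-wise comparison yields $\prod_{j=1}^{i-1}\bigl(1+\frac{1}{\bar\beta_j}\sum_{k=j+1}^n|b_{jk}|\bigr)\le \prod_{j=1}^{i-1}\frac{b_{jj}}{\tilde\beta_j}$. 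Multiplying by the already-compared outer weights $\frac{n-1}{\min\{\bar\beta_i,1\}}\le \frac{n-1}{\min\{\tilde\beta_i,1\}}$ and summing over $i$ completes the proof.

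There is really no hard step here; the only thing to be careful about is positivity, since the factor-wise comparisons rely on all the $\bar\beta_j,\tilde\beta_j$ being strictly positive (guaranteed by the $B$-matrix hypothesis) and on $l_j(B^+)\le 1$. The base case $i=1$ is automatic because both empty products equal $1$.
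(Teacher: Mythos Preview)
Your proof is correct and follows essentially the same route as the paper's: both arguments establish $\tilde\beta_j\le\bar\beta_j$ from $l_j(B^+)\le 1$, then compare the products factor by factor via $\frac{b_{jj}}{\bar\beta_j}=1+\frac{1}{\bar\beta_j}\sum_{k=j+1}^n|b_{jk}|\,l_j(B^+)\le 1+\frac{1}{\bar\beta_j}\sum_{k=j+1}^n|b_{jk}|$ for the first inequality and $1+\frac{1}{\bar\beta_j}\sum_{k=j+1}^n|b_{jk}|\le 1+\frac{1}{\tilde\beta_j}\sum_{k=j+1}^n|b_{jk}|=\frac{b_{jj}}{\tilde\beta_j}$ together with the outer-weight comparison for the second. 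The only difference is cosmetic (you clear the denominator $\bar\beta_j$ to get $\sum_{k=j+1}^n|b_{jk}|(1-l_j(B^+))\ge 0$, whereas the paper adds and subtracts $\sum_{k=j+1}^n|b_{jk}|\,l_j(B^+)$ in the numerator of $b_{jj}/\bar\beta_j$).
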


\begin{proof}Note that
\[\tilde{\beta}_i=b_{ii}-\sum\limits_{j= i+1}^{n}|b_{ij}|,~\bar{\beta}_i=b_{ii}-\sum\limits_{j=i+1}^n|b_{ij}|l_i(B^+) \] and
$l_k(B^+)= \max\limits_{k\leq i\leq n}\left\{ \frac{1}{|b_{ii}|}
\sum\limits_{j=k,\atop j\neq i}^n |b_{ij}| \right\} < 1 $. Hence,
for each $i\in N$,  $\tilde{\beta}_i \leq \bar{\beta}_i $ and
\begin{equation}  \label{neq2.9} \frac{1}{\min\{ \tilde{\beta}_i,1 \}} \geq \frac{1}{\min\{
\bar{\beta}_i,1 \}}. \end{equation} Meantime, for
$j=1,2,\ldots,n-1$,
\begin{equation} \label{neq2.10}   1+ \frac{1}{\bar{\beta}_j}
\sum\limits_{k=j+1}^n|b_{jk}| \leq 1+ \frac{1}{\tilde{\beta}_j}
\sum\limits_{k=j+1}^n|b_{jk}| =
\frac{1}{\tilde{\beta}_j}\left(\tilde{\beta}_j+
\sum\limits_{k=j+1}^n|b_{jk}| \right)=
\frac{b_{jj}}{\tilde{\beta}_j}.
\end{equation}
By (\ref{neq2.9}) and   (\ref{neq2.10}), we have
\begin{equation} \label{neq2.11} \sum\limits_{i=1}^n \frac{n-1}{\min\left\{
\bar{\beta}_i, 1\right\}} \prod\limits_{j=1}^{i-1} \left(1+
\frac{1}{\bar{\beta}_j} \sum\limits_{k=j+1}^n|b_{jk}| \right)
\leq\sum\limits_{i=1}^{n}
\left(\frac{n-1}{\min\{\tilde{\beta}_i,1\}} \prod \limits_{j =1}^{
i-1} \frac{b_{jj}}{\tilde{\beta}_j} \right).\end{equation} Moreover,
for $j=1,2,\ldots,n-1$,
\begin{eqnarray*}
\frac{b_{jj}}{\bar{\beta}_j}&=& \prod\limits_{j=1}^{i-1}
\frac{b_{jj}-\sum\limits_{k=j+1}^n|b_{jk}|l_j(B^+)+\sum\limits_{k=j+1}^n|b_{jk}|l_j(B^+)}{\bar{\beta}_j}\nonumber
\\&=&\frac{\bar{\beta}_j+\sum\limits_{k=j+1}^n|b_{jk}|l_j(B^+)}{\bar{\beta}_j}\nonumber
\\&=&\left(1+\frac{\sum\limits_{k=j+1}^n|b_{jk}|l_j(B^+)}{\bar{\beta}_j}\right)\nonumber
\\&\leq&\left(1+\frac{\sum\limits_{k=j+1}^n|b_{jk}|}{\bar{\beta}_j}\right),
\end{eqnarray*}
this implies \begin{equation} \label{neq2.12} \sum\limits_{i=1}^n
\frac{n-1}{\min\left\{ \bar{\beta}_i, 1\right\}}
\prod\limits_{j=1}^{i-1} \frac{b_{jj}}{\bar{\beta}_j} \leq
\sum\limits_{i=1}^n \frac{n-1}{\min\left\{ \bar{\beta}_i, 1\right\}}
\prod\limits_{j=1}^{i-1} \left(1+ \frac{1}{\bar{\beta}_j}
\sum\limits_{k=j+1}^n|b_{jk}| \right)\nonumber .\end{equation} The
conclusion follows from (\ref{neq2.11})and (\ref{neq2.12}).
\end{proof}

%\begin{rmk}In Corollary 1 of \cite{Li}, Li and Li provided another upper bound for $\max\limits_{d\in [0,1]^n}||(I
%-D+DM)^{-1}||_{\infty}$ where $M$ is a $B$-matrix:
%\begin{equation}\label{neq2.9}\max\limits_{d\in [0,1]^n}||(I
%-D+DM)^{-1}||_{\infty} \leq \sum\limits_{i=1}^{n}
%\left(\frac{n-1}{\min\{\tilde{\beta}_i,1\}} \prod \limits_{j =1}^{
%i-1} \frac{b_{jj}}{\tilde{\beta}_j} \right),\end{equation} where
%\[\tilde{\beta}_i=b_{ii}-\sum\limits_{j= i+1}^{n}|b_{ij}| >0.\]
%Since $l_j(B^+)< 1$ for each  $j\in N$, we have that $\bar{\beta}_j
%> \tilde{\beta}_j $ for $j\in N$, and
% Hence, the bound in (\ref{neq2.0}) is less than that in
%(\ref{neq2.9}), that is,
%\[\frac{n-1}{\min\left\{ \bar{\beta}_1, 1\right\}}+
%\sum\limits_{i=2}^n \frac{n-1}{\min\left\{ \bar{\beta}_i, 1\right\}}
%\prod\limits_{j=1}^{i-1} \left(1+ \frac{1}{\bar{\beta}_j}
%\sum\limits_{k=j+1}^n|b_{jk}|  \right) < \sum\limits_{i=1}^{n}
%\left(\frac{n-1}{\min\{\tilde{\beta}_i,1\}} \prod \limits_{j =1}^{
%i-1} \frac{b_{jj}}{\tilde{\beta}_j} \right).\]
%\end{rmk}

\begin{example} Consider the family of $B$-matrices in \cite{Li}:
\[ M_k=
\left[\begin{array}{cccc}
1.5   & 0.5 &0.4&0.5\\
-0.1   &1.7  &0.7&0.6\\
0.8   & -0.1\frac{ k}{k+1} &1.8 & 0.7 \\
0   &0.7  &0.8&   1.8
\end{array} \right],\]
where $k\geq 1$. Then $M_k=B_k^++C_k$, where
\[ B_k^+=
\left[\begin{array}{cccc}
1   & 0 &-0.1&0\\
-0.8   &1  &0&-0.1\\
0   & -0.1\frac{ k}{k+1}-0.8 &1 & -0.1 \\
-0.8   &-0.1  &0&   1
\end{array} \right].\]
By Theorem \ref{th1.1} (Theorem 2.2 in \cite{Ga}), we have
\[ \max\limits_{d\in [0,1]^4}||(I -D+DM)^{-1}||_{\infty}\leq
\frac{4-1}{\min\{\beta,1\}}= 30(k+1).\] It is obvious that
\[30(k+1)\rightarrow +\infty, when ~k\rightarrow +\infty.\]
By Theorem \ref{th1.3}, we have
\[ \max\limits_{d\in [0,1]^4}||(I -D+DM)^{-1}||_{\infty}\leq
 \sum\limits_{i=1}^{4}
\left(\frac{3}{\min\{\tilde{\beta}_i,1\}} \prod \limits_{j =1}^{
i-1} \frac{b_{jj}}{\tilde{\beta}_j} \right)\thickapprox 15.2675.\]
By Theorem \ref{th1.2}, we have
\[ \max\limits_{d\in [0,1]^4}||(I -D+DM_k)^{-1}||_{\infty} \leq
\frac{2.97(90k+91)(190k+192)+6.24(100k+101)^2}{0.99\left(  90k+91
\right)^2},\] and
\[\frac{2.97(90k+91)(190k+192)+6.24(100k+101)^2}{0.99\left(  90k+91
\right)^2}<15.2675,~for ~any ~k\geq 1.\] By Theorem \ref{main}, we
have
\[ \max\limits_{d\in [0,1]^4}||(I -D+DM_k)^{-1}||_{\infty} \leq
\frac{2.97(90k+91)(190k+191)+5.97(100k+100)^2}{0.99\left(  90k+91
\right)^2},\] and
\begin{eqnarray*}&&\frac{2.97(90k+91)(190k+191)+5.97(100k+100)^2}{0.99\left(  90k+91
\right)^2}\\&<&\frac{2.97(90k+91)(190k+192)+6.24(100k+101)^2}{0.99\left(
90k+91 \right)^2}.\end{eqnarray*} In particular, when $k=1$,
\[\frac{2.97(90k+91)(190k+191)+5.97(100k+100)^2}{0.99\left(  90k+91
\right)^2}\thickapprox 13.6777,\] and
\[\frac{2.97(90k+91)(190k+192)+6.24(100k+101)^2}{0.99\left(  90k+91
\right)^2} \thickapprox 14.1044.\] When $k=2$,
\[\frac{2.97(90k+91)(190k+191)+5.97(100k+100)^2}{0.99\left(  90k+91
\right)^2}\thickapprox  13.7110,\] and
\[\frac{2.97(90k+91)(190k+192)+6.24(100k+101)^2}{0.99\left(  90k+91
\right)^2} \thickapprox  14.1079.\]  In these two cases, the bounds
in (\ref{neq1.4}) are equal to $60$ ($k=1$) and $90$ ($k=2$)
respectively. This example shows that the bound in Theorem
\ref{main} is sharper than those in Theorems \ref{th1.1},
\ref{th1.2} and \ref{th1.3}.
\end{example}

%%%%%%%%%%%%%%%%%%%%%%%%%%%%%%%%%%%%%%%%%%%%%%%%%%%%%%%%%%%%%%%%%%%%%%
\section{Conclusions} In this paper, we give a new bound for $\max\limits_{d\in
[0,1]^n}||(I -D+DM)^{-1}||_{\infty}$ when $M$ is a $B$-matrix, which
improves the bounds obtained  in \cite{Ga,Li,Li1}.

%%%%%%%%%%%%%%%%%%%%%%%%%%%%%%%%%%%%%%%%%%%%%%%%%%%%%%%%%%%%%%%%%%%%%%%

\section*{Acknowledgements}
This work is supported by National Natural Science Foundations of
China (11361074), Natural Science Foundations of Yunnan Province
(2013FD002) and IRTSTYN.
%% The Appendices part is started with the command \appendix;
%% appendix sections are then done as normal sections
%% \appendix

%% \section{}
%% \label{}

%% References
%%
%% Following citation commands can be used in the body text:
%% Usage of \cite is as follows:
%%   \cite{key}         ==>>  [#]
%%   \cite[chap. 2]{key} ==>> [#, chap. 2]
%%

%% References with bibTeX database:

%\bibliographystyle{elsarticle-num}
%\bibliography{<your-bib-database>}

%% Authors are advised to submit their bibtex database files. They are
%% requested to list a bibtex style file in the manuscript if they do
%% not want to use elsarticle-num.bst.

%% References without bibTeX database:

\end{document}